\newtheorem{Th}{Theorem}[section]
\newtheorem{Conj}{Conjecture}[section]
\newcommand{\address}[1]{\vskip3mm\noindent#1}
\newcommand{\N}{{\mathbb{N}}}
\newcommand{\Z}{{\mathbb{Z}}}
\renewenvironment{matrix}[1][r]{%
  \hskip -\arraycolsep\array{*\c@MaxMatrixCols {#1}}%
}{%
  \endarray \hskip -\arraycolsep
}
\numberwithin{equation}{section}
  \newcommand{\rft}[1]{(\ref{#1})}
\begin{document}

\title{A Conjecture Connected with Units of  Quadratic Fields\\}

\author{{ Nihal Bircan}}

\maketitle
\date{}

\begin{center}
\address{ Technische Universit\"{a}t Berlin, Fakult\"{a}t II, Institut f\"{u}r Mathematics, MA $8-1$ Strasse des $17.$ Juni $136$ D-$10623$ Berlin, Germany\\
bircan@math.tu-berlin.de}
\end{center}
 
\begin{abstract}
In this article, we consider the order $\mathcal{O}_{f}=\{x+yf\sqrt{d}:x,\ y \in \Z \}$ with conductor $f\in\N$ in a
real quadratic field $K=\mathbb{Q}(\sqrt{{d}})$ where $d>0$ is square-free and $d\equiv2,3\pmod 4$. We obtain numerical information about 
$ n(f)=n(p)=min\{ \nu\in\N : \varepsilon^{\nu}\in \mathcal{O}_{p}\}$ where $\varepsilon>1$ is the fundamental unit of $K$ and $p$ is an odd prime. Our numerical results 
suggest that the frequencies of $\frac{p\pm1}{2n(p)}$ or $\frac{p\pm1}{n(p)}$ should have a limit  as the ranges of $d$ and $p$ go to infinity.

\end{abstract} 
{\bf $2010$ Mathematics Subject Classification :}{$11$Y$40$, $11$R$27$, $11$R$11$}\\
{\bf Keywords and phrases:} {Units, conductors, quadratic fields, probability \\distribution}
\section{Introduction}
Throughout the paper, we consider the real quadratic field  $K=\mathbb{Q}(\sqrt{d})$ where $d>0$ is square-free and $d\equiv2,3\pmod 4$. An {\it order} $\mathcal{O}$ in the field $K$ 
is a subset $\mathcal{O}\subset K$ such that 
\begin{compactitem}
\item[i)] $\mathcal{O}$ is a subring of $K$ containing $1$.
\item[ii)] $\mathcal{O}$ is a finitely generated $\Z$-module.
\item[iii)] $\mathcal{O}$ contains a $\mathbb{Q}$-basis of $K$.
\end{compactitem}
{\it Maximal order} of an algebraic number field $K$ is the integral closure of the ring $\Z$ of rational numbers in the field $K$. So, among various orders of the field $K$ 
there is one maximal one which contains all the other orders which we call $\mathcal{O}_{K}$. Any order $\mathcal{O}$ of the field $\mathbb{Q}(\sqrt{d})$ is of the form 
 $\mathcal{O}_{f}=\{1,f\omega\}$ where $f$ is the index $[\mathcal{O}_{K}:\mathcal{O}]$ namely, the {\it conductor} of the order. One can represent any number of the order 
$\mathcal{O}_{f}$ in the form $\mathcal{O}_{f}=\{x+yf\sqrt{d}:x,\ y \in \Z \}$. For $d\equiv2,3\pmod 4$ a basis $\{1,\omega\}=\{1,\sqrt{d}\}$. 


For $d\equiv 2,\ 3 \pmod 4$ one can write very well known Pell's equation as
\begin{equation*}
 (x+y\sqrt{d})(x-y\sqrt{d})=\pm1
\end{equation*}
%
The $s-$th solution $x_{s},y_{s}$ can be expressed in terms of the first one $x_{1},y_{1}$ by  
\begin{equation}\label{a1}
 x_{s}+y_{s}\sqrt{d}=(x_{1}+y_{1}\sqrt{d})^{s}\ \ (s\in\N)
\end{equation}
The first solution $(x_{1},y_{1})\in\N^{2}$ is called {\it fundamental solution} to the Pell equation. 
{\it Fundamental units} $\varepsilon$ for real quadratic fields $\mathbb{Q}(\sqrt {d})$ can be computed from the fundamental solution of 
Pell's equation. So, fundamental units for the maximal order $\mathcal{O}_{K}$ of the field $K=\mathbb{Q}(\sqrt{d})$ are also 
called fundamental units for the algebraic number field $K=\mathbb{Q}(\sqrt{d})$. For more information on units, orders and 
Pell's equation see \cite[p.$133$ ]{Cox}, \cite[p.$80$ ]{JW}, \cite[p.$72$  ]{Neu} and other many algebraic number theory books. For the efficient 
computation of units and solution of Pell's equation see \cite{Len}. One can also
 refer to \cite{JLW} to see about the question how large the fundamental unit $\varepsilon$ can be.


In this article, $\varepsilon$  is the fundamental unit of the real quadratic field  $\mathbb{Q}(\sqrt {d})$ and  $N(\varepsilon)=\pm1$ is its norm. We consider the case that 
the conductor $f$ is an odd prime $p$ with $p\nmid d.$ The aim of this article is to get numerical information about 
\begin{equation*}
 n(f)=n(p)=min\{ \nu\in \N : \varepsilon^{\nu}\in \mathcal{O}_{p}\}.
\end{equation*}
In \cite{BP}, we compute very good upper bounds for $n(f)$ for the case $N(\varepsilon)=+1$ and in \cite{Bir} for the case in particular that $N(\varepsilon)=-1$. 
It is known that (see for instance \cite{BP}) for  $\left(\frac{d}{p}\right)=\mp1$, $n(f)=n(p)$ is always a divisor of $\frac{p\pm1}{2}$ if $N(\varepsilon)=+1$ and of $p\pm1$ if $N(\varepsilon)=-1$. Our results become easier to state if we consider the integers, quotients $q$
 defined by
\begin{equation}\label{q1}
q= \left\{\begin{array}{lll}  \frac{p\pm1}{2n(p)}& \text{if }N(\varepsilon)=+1& \text{ where } \left(\frac{d}{p}\right)=\mp1   \\
                                   \frac {p\pm1}{n(p)} &\text{if }  N(\varepsilon)=-1& \text{ where } \left(\frac{d}{p}\right)=\mp1.
                                 \end{array}\right.
\end{equation}
We compute the frequencies of $q$. Our numerical results suggest that the frequencies may have a limit as the ranges of $p$ and $d$ go to infinity. 
We prove a theorem suggested by the numerical results and study the expectation of the frequency. 
%
\section{Computations and Conjecture}
 We recall that we always assume that $d\equiv2,3\pmod 4$ is positive, square-free and $p$ is an odd prime. We have $\mathcal{O}_{f}=\{x+yf\sqrt{d}:x,\ y \in \Z \}$. 
Every unit of $\mathcal{O}$ that belongs to $\mathcal{O}_{f}$ is also a unit of $\mathcal{O}_{f}$. 

We compute $n(f)=n(p)=min\{\nu\in \N: \varepsilon^{\nu}\in \mathcal{O}_{f}\}$
for the case that $f$ is an odd prime $p$. For $N(\varepsilon)=+1$, the possible maximal values of $n(f)$ are $\frac{p+1}{2}$ for 
$\left(\frac{d}{p}\right)=-1$, $\frac{p-1}{2}$ for $\left(\frac{d}{p}\right)=+1$; These give $q=1$ defined in \rft{q1}.

\bigskip
The computer we use has $2$x DualCore-Opteron $2218$ ($2.6$ GHz) with $16$ GB(1 Node with $32$ GB) RAM. One can also refer to the appendix to see the algorithm.

\bigskip
Our results depend on two arguments, the range and the quotient $q$. It would be difficult to give a plot. For reasons of space we only 
present the first $20$ frequencies  in our tables as percentages. We take ranges $d=[3,\dots, d_{max}]$ and $p=[3,\dots, p_{max}]$ for $N(\varepsilon)= +1$. For $N(\varepsilon)= -1$, we take $d=[2,\dots, d_{max}]$ and $p=[3,\dots, p_{max}]$.
 For $N(\varepsilon)=-1$, the computations run faster because there are few occurencies so this enables us to implement larger and different ranges. To save space and to be neat, we only write $d_{max}=p_{max}$ and some of the ranges we calculate in the following tables. The Legendre symbol differs according to the  elements $q$ defined in \rft{q1}.
For $m\in\N$ we define four sequences according to the formula \rft{q1};

$F_{1}(q;m)=\text{card}\{q=(p-1)/2n(p):d,\ p \leq m, \left(\frac{d}{p}\right)=+1,\ N(\varepsilon)=+1\}$.

$F_{2}(q;m)=\text{card}\{q=(p+1)/2n(p):d,\ p \leq m, \left(\frac{d}{p}\right)=-1,\ N(\varepsilon)=+1\}$.

$F_{3}(q;m)=\text{card}\{q=(p-1)/n(p):d,\ p \leq m, \left(\frac{d}{p}\right)=+1,\ N(\varepsilon)=-1\}$.

$F_{4}(q;m)=\text{card}\{q=(p+1)/n(p):d,\ p \leq m, \left(\frac{d}{p}\right)=-1,\ N(\varepsilon)=-1\}$.

For instance, for the case $N(\varepsilon)=+1$, by the definition \rft {q1} of $q$, we have $q\leq\frac{p-1}{2}$. Hence $F_{1}=0$ for $q> \frac{p-1}{2}$. This means that,
when the ranges become larger, more and more frequencies will become positive. This suggests that for a fixed $q$ the frequencies will slowly fluctuate. \\
Furthermore, let $S_{j}(m)=\sum_{q}F_{j}(q;m)$.  In the tables {\it values} shows the number of occurencies of $\left(\frac{d}{p}\right)=+1$ and $\left(\frac{d}{p}\right)=-1$ respectively. 
The tables suggest the following conjecture;

\begin{Conj}Let $j=1,2,3,4$. There is a probability distribution $P_{j}(q)$ such that, for all $q$,
\begin{equation*}
F_{j}(q;m)/S_{j}(m)\to P_{j}(q) \text{ as } m\to\infty.
\end{equation*}
More precisely: For $j=1,\dots,4$ there is a function
\begin{equation*}
P_{j}:\N\to [0,1]\text{ with }\sum_{q}P_{j}(q)=1
\end{equation*}
such that for every $\delta>0$ there exists $m_{0}$ with the property that
\begin{equation*}
|F_{j}(q;m)/S_{j}(m)-P_{j}(q)|<\delta
\end{equation*}
for all $q$ and $m\geq m_{0}$.
\end{Conj}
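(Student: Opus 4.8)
The plan is to recast $n(p)$ as a multiplicative order, so that $q$ becomes (essentially) the index of a cyclic subgroup generated by the reduction of $\varepsilon$, and then to treat the conjecture as a two-parameter version of Artin's primitive-root problem in which one averages over the discriminant $d$ as well as over the prime $p$.

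First I would make the reduction already used in \cite{BP}. Write $\overline{\varepsilon}$ for the image of $\varepsilon$ in $(\mathcal{O}_{K}/p\mathcal{O}_{K})^{*}$. Since $\mathcal{O}_{K}=\Z[\sqrt d]$ and $\mathcal{O}_{p}=\Z+p\Z[\sqrt d]$, the condition $\varepsilon^{\nu}\in\mathcal{O}_{p}$ is equivalent to $\overline{\varepsilon}^{\nu}$ lying in the image of $\Z$, i.e. in $\F_{p}^{*}$. Hence $n(p)$ is the order of $\overline{\varepsilon}$ in the cyclic group $G_{p}:=(\mathcal{O}_{K}/p\mathcal{O}_{K})^{*}/\F_{p}^{*}$, whose order is $p-1$ when $\left(\frac{d}{p}\right)=+1$ and $p+1$ when $\left(\frac{d}{p}\right)=-1$. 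The factor $2$ in \rft{q1} records that $N(\varepsilon)=+1$ forces $\overline{\varepsilon}$ into the index-$2$ subgroup of $G_{p}$ (the squares in the split case, the image of the norm-one subgroup in the inert case). Thus, up to that factor, $q=q(d,p)$ is the index of $\langle\overline{\varepsilon}\rangle$ in $G_{p}$, and $F_{j}(q;m)$ counts pairs $(d,p)$ with $d,p\le m$ of the prescribed type for which this index equals a fixed value $i=i(q)$.

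Next, fix $d$ and study the distribution over $p$. The event ``$n\mid$ index of $\langle\overline{\varepsilon}\rangle$ in $G_{p}$'' is a Frobenius condition via Kummer theory: in the split case it says $p$ splits completely in $\Q(\zeta_{n},\varepsilon^{1/n})$; in the inert case it is the analogous statement inside $\F_{p^{2}}$, which one encodes through the field $\Q(\sqrt d,\zeta_{n},\varepsilon^{1/n})$, and when $N(\varepsilon)=-1$ there is an additional congruence on $\nu$ modulo a small power of $2$. Chebotarev plus inclusion--exclusion over $n$ then gives, \emph{on GRH} and by Hooley's method, that $\#\{p\le X:\ \text{index}=i\}/\pi(X)$ tends, as $X\to\infty$, to
\begin{equation*}
c_{j}(q;d)=\sum_{k\ge 1}\frac{\mu(k)}{\bigl[\Q(\zeta_{ik},\varepsilon^{1/(ik)}):\Q\bigr]},
\end{equation*}
with the evident modifications for the factor $2$, the sign of $N(\varepsilon)$, and the inert case. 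This already identifies the candidate $P_{j}(q)=\lim_{m\to\infty}\#\{d\le m\}^{-1}\sum_{d\le m}c_{j}(q;d)$, and summing over $q$ should yield $\sum_{q}P_{j}(q)=1$ (while $\sum_{q}qP_{j}(q)$ may well diverge, matching the slow tails visible in the tables).

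The genuine obstacle is proving the conjecture \emph{as stated}, with $d$ and $p$ both running up to $m$: this requires the inner estimate with an error term uniform in $d$, whereas the Kummer fields $\Q(\zeta_{n},\varepsilon^{1/n})$ have discriminant growing like a power of $\varepsilon=\varepsilon_{d}$ (and $\log\varepsilon_{d}$ can reach $\asymp\sqrt d\,\log d$), so pointwise the Chebotarev error is useless. The remedy---the same device that makes ``Artin on average'' theorems unconditional (Stephens, Goldfeld)---is to sum over $d\le m$ first, turning $\sum_{d\le m}\sum_{p\le m}\mathbf{1}[\text{index}=i]$ into bilinear and large-sieve estimates that simultaneously supply the uniformity and dispense with GRH. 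One is then left with the arithmetic core: evaluating the $d$-average of the degrees $[\Q(\zeta_{ik},\varepsilon_{d}^{1/(ik)}):\Q]$, which means controlling how often $\varepsilon_{d}$ is a genuine perfect power, the entanglement of $\Q(\zeta_{ik})$ with $\Q(\varepsilon_{d}^{1/(ik)})$, and the built-in links between $\varepsilon_{d}$ and the quadratic field (for instance $\Q(\sqrt{\varepsilon_{d}})\supseteq\Q(\sqrt d)$, and it contains $\sqrt{-1}$ exactly when $N(\varepsilon)=-1$). I expect the two-parameter uniformity to be the hard point: even the case $i=1$ for a single $d$ is Artin's primitive-root conjecture for the base $\varepsilon_{d}$, hence unconditionally open, so a complete proof must route through the averaging.
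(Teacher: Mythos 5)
The statement you are addressing is labelled a \emph{Conjecture} in the paper, and the paper offers no proof of it: the only support given is the tables of frequencies, plus a short theorem ruling out $q\equiv 0\pmod 4$ in the fourth case. So there is no "paper's proof" to compare against, and your text should be judged purely as a proof attempt. As such it is not a proof. Your structural reduction is correct and worth having: $\varepsilon^{\nu}\in\mathcal{O}_{p}$ iff the image of $\varepsilon^{\nu}$ in $(\mathcal{O}_{K}/p\mathcal{O}_{K})^{*}/\F_{p}^{*}$ is trivial, so $n(p)$ is the order of $\overline{\varepsilon}$ in a cyclic group of order $p\mp1$, the factor $2$ in \rft{q1} is accounted for by the norm-one condition, and $q$ is (up to that factor) the index of $\langle\overline{\varepsilon}\rangle$. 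This correctly identifies the conjecture as an averaged Artin-index problem and even suggests a candidate formula for $P_{j}(q)$ as a $d$-average of Hooley-type densities. That is more structural insight than the paper provides.

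The gap is that every analytic step after the reduction is asserted rather than carried out, and you say so yourself. Concretely: (i) the inner densities $c_{j}(q;d)$ are only known under GRH, and even then the error term degrades with the discriminant of $\Q(\zeta_{ik},\varepsilon_{d}^{1/(ik)})$, which grows with $\log\varepsilon_{d}\asymp\sqrt{d}\log d$, so no uniformity in $d\le m$ is available; (ii) the proposed remedy — bilinear/large-sieve estimates in the style of Stephens and Goldfeld — is named but not executed, and it is not clear that those methods transfer to a base $\varepsilon_{d}$ that varies erratically with $d$ (in particular one would need to control how often $\varepsilon_{d}$ is a perfect power and the entanglement of the relevant Kummer extensions, which you list as open subproblems); (iii) even granting pointwise convergence for each fixed $q$, the conjecture as stated demands the bound $|F_{j}(q;m)/S_{j}(m)-P_{j}(q)|<\delta$ \emph{uniformly in} $q$, together with $\sum_{q}P_{j}(q)=1$, i.e. no escape of mass to large $q$; nothing in the sketch addresses this tightness issue, and the heavy tails visible in the tables show it is not automatic. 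So the proposal is a plausible research programme, not a proof, and the statement remains a conjecture.
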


\footnotesize
\begin{center}
\begin{tabular}{|c|c|c|c|c|c|c|c|}\hline
\multicolumn{7}{|c|}{$N(\varepsilon)=+1,\left(\frac{d}{p}\right)=+1$}\\
 \hline
\backslashbox { $q$}{ $d_{max}$ }& $10000$&$20000$&$ 40000 $&$60000$&$ 80000 $&$ 100000 $\\ 
\hline
$1$& $57.3$ & $56.9$ &$56.9 $&$  56.7 $ &$ 56.7  $& $ 56.6$ \\
$2$& $11.8$& $12.0$ &$  12.0$&$ 12.1  $&$ 12.2  $& $12.2 $\\
$3$&  $9.91$&  $9.89$& $9.89  $&  $  9.88 $ &$ 9.86 $ &$9.88 $\\
$4$&  $ 4.66$& $4.72$&$ 4.64 $&  $ 4.64 $&$ 4.65 $&$4.67  $  \\ 
$5$&  $2.86$ &$2.84$ &$ 2.84 $ & $ 2.84$&$ 2.82 $&$2.82  $\\
$6$&   $1.98$& $2.05$ &$ 2.10 $ &  $2.13  $ &$2.14  $&$  2.16 $\\ 
$7$& $1.34$ & $1.35$ & $ 1.34$& $ 1.35$&$1.34   $& $ 1.34  $\\
$8$&  $1.12 $&  $1.16$  & $1.16 $&  $1.16  $ & $ 1.16 $&$ 1.16 $\\
$9$& $1.09$ &  $1.08$ & $1.09  $ &  $1.08  $& $ 1.09 $&$1.09   $ \\
$10$& $0.604$& $0.610$ &$ 0.609 $& $ 0.612 $&$ 0.618 $&$ 0.619 $ \\ 
$11$& $0.530$&$0.515$& $0.521  $&$ 0.518 $ & $0.517  $&$ 0.510  $ \\
$12$&$ 0.817$ &$0.823$ &$ 0.841 $&$ 0.854  $  &$ 0.866 $&$0.874  $\\ 
$13$&$ 0.349$ &$0.365$&$ 0.363  $ &$ 0.362 $&$0.363 $&$ 0.363  $ \\
$14$&$ 0.294$ &$ 0.292$&$ 0.289 $&$  0.292 $&$ 0.291 $ &$0.295  $  \\
$15$&$0.497 $ &$0.489$&$  0.494 $&$ 0.492  $&$ 0.491$ &$ 0.495$\\
$ 16$&$0.280 $ &$0.306 $&$ 0.302  $ &$0.296  $&$0.293 $&$  0.293$\\
$ 17$&$ 0.216$ &$0.214 $&$0.210  $ &$ 0.214 $&$ 0.213$&$0.210 $\\
$ 18$&$ 0.221$ &$0.229 $&$ 0.237 $&$ 0.239 $&$ 0.238$&$  0.240$ \\
$ 19$&$0.161  $ &$0.164 $&$0.159  $ &$ 0.164  $&$ 0.163$ &$ 0.162$\\
$ 20$&$ 0.233  $ &$ 0.238$&$  0.239 $&$  0.236$ &$ 0.238$&$0.240 $ \\
$\vdots$&$ \vdots$ &$\vdots$&$ \vdots$&$ \vdots$ &$ \vdots$&$ \vdots$\\ \hline
Values &  $2249621$ &$8330759$   &   $31140582$ &$67496484 $& $ 116576513 $& $ 178609439 $\\ \hline
\end{tabular}
\end{center}

\footnotesize
\begin{center}
\begin{tabular}{|c|c|c|c|c|c|c|c|}\hline

\multicolumn{7}{|c|}{$N(\varepsilon)=+1, \left(\frac{d}{p}\right)=-1$}\\ \hline
\backslashbox { $q$}{ $d_{max}$ }& $10000$& $20000$& $40000$&$60000$&$ 80000 $& $ 100000 $\\ \hline
$1$& $56.3$&$56.2$ &$ 56.0 $&$ 56.1  $&$ 56.1  $ & $ 56.1$\\
$2$& $14.5 $&  $14.4$& $ 14.4 $ &  $ 14.3 $ & $14.3  $&$ 14.4$\\
$3$&$9.94$&$9.97$& $9.98  $&  $ 9.97 $ &$ 10.0  $&$ 9.98$ \\
$4$& $ 3.32$ & $3.31$ &$3.36  $&  $ 3.37 $ &$3.37  $& $ 3.34  $ \\
$5$& $2.77 $& $2.76$&$2.80  $& $2.80 $ &$2.80   $&$  2.81 $ \\
$6$& $2.45$ & $2.48$&$2.49  $&  $2.49  $ &$ 2.49 $&$  2.50$  \\
$7$&$ 1.32 $& $1.36$& $1.35 $&$1.36 $&$1.36 $ & $ 1.35  $\\
$8$&  $0.868$&  $0.840$& $ 0.849$&  $0.836$& $0.839  $& $ 0.837 $\\
$9$& $1.07$ &$1.10$& $  1.10$&  $1.11  $ &$1.12  $&$ 1.12  $\\
$10$&$0.718$ &$0.728$&$ 0.721 $&$0.725  $ & $0.736  $&$0.729   $ \\ 
$11$& $ 0.512$&$0.502$& $ 0.518$&$0.518  $ & $0.519  $&$0.514  $\\
$12$&$ 0.561$ &$0.566$&$0.583  $&$0.581  $ &$0.578 $&$0.571   $\\ 
$13$&$ 0.352$ &$0.363$&$ 0.352 $&$0.355  $ &$ 0.351 $&$0.354  $\\
$14$&$0.341  $ &$ 0.339$&$0.328  $&$0.339  $ &$0.333 $&$ 0.332   $ \\
$15$&$0.480 $ &$0.487$&$ 0.508  $&$ 0.507 $ &$0.508 $&$0.506 $ \\
$16 $&$ 0.202$&$0.200 $ &$ 0.206  $ &$ 0.206$&$ 0.210 $&$ 0.209$  \\
$17 $&$ 0.190$&$ 0.199$ &$ 0.201 $ &$0.201 $&$0.200  $&$0.204 $  \\
$18 $&$0.274 $&$0.268 $ &$0.274  $ &$0.274 $&$0.272  $&$ 0.273$  \\
$ 19$&$ 0.165 $&$0.168 $ &$0.169  $ &$ 0.168$&$ 0.168 $&$ 0.165$  \\
$20 $&$0.177 $&$  0.176$ &$  0.175 $ &$ 0.175$&$0.174  $&$0.171 $ \\
$\vdots$&$ \vdots$ &$ \vdots$ &$\vdots$&$\vdots$&$\vdots$&$\vdots$\\\hline
 Values &  $2272057$ &$8390244$&  $ 31303879  $&  $67789746  $&  $ 117013651 $& $ 179198341 $\\ \hline
\end{tabular}
\end{center}

\footnotesize
\begin{center}
\begin{tabular}{|c|c|c|c|c|c|c|c|c|}\hline
\multicolumn{7}{|c|}{$N(\varepsilon)=-1, \left(\frac{d}{p}\right)=+1$}\\ \hline
\backslashbox { $q$}{ $d_{max}$ }& $40000 $ &$60000$& $80000$&$ 100000  $& $ 200000 $& $400000$ \\
\hline
$1$& $ 37.8  $ &$ 37.6 $&$37.6   $&$ 37.5  $& $ 37.4 $& $37.5 $ \\
$2$& $18.7  $  & $18.8  $&  $18.8  $& $ 18.8 $&$18.7  $ &$18.7 $ \\
$3$& $6.58 $& $  6.56$&  $ 6.60 $& $6.62  $&$  6.64 $ &$6.62 $ \\
$4$& $14.0 $ & $14.0   $&  $ 14.0 $&$14.0  $& $14.0  $&$14.0  $ \\ 
$5$& $ 1.90 $ & $ 1.91 $& $ 1.89 $&$1.90  $&$ 1.89  $&$1.90  $\\
$6$& $3.28 $  & $ 3.30  $&  $ 3.29 $&$3.29  $&$3.31  $ &$ 3.31  $ \\ 
$7$&$ 0.891$& $0.897 $&  $ 0.899 $&$ 0.898  $& $0.894  $ & $0.892  $\\
$8$& $3.46  $ &  $3.48    $&  $ 3.49 $& $3.51  $& $3.50  $&$3.49  $\\
$9$& $ 0.724$  &$  0.716 $&  $0.725  $ & $ 0.728 $&$0.733  $&$ 0.735   $ \\
$10$& $0.930 $ & $0.928   $&$ 0.936 $& $ 0.937 $&$0.943  $&$0.938   $ \\ 
$11$& $0.352 $&$0.351  $&$0.348$&$ 0.340 $&$ 0.343 $ &$0.339  $ \\ 
$12$& $2.49 $ &$2.50  $&$2.48  $&$2.48  $&$2.49    $&$ 2.49 $  \\
$13$& $0.237 $&$0.239 $&$0.240  $&$ 0.243   $&$0.239  $ &$0.245  $ \\
$14$& $ 0.440$&$0.440   $&$ 0.438 $&$0.442   $&$0.444 $ &$0.447$ \\
$15$& $ 0.329$&$ 0.329 $&$  0.330 $&$ 0.333  $&$ 0.333 $ &$0.334 $ \\
$16 $&$ 0.866$&$0.868 $ &$ 0.863  $ &$0.866 $&$0.872  $&$0.874 $ \\
$ 17$&$0.140 $&$0.143 $ &$0.143  $ &$0.140 $&$0.138  $&$0.140 $ \\
$18 $&$0.371 $&$0.371 $ &$ 0.365 $ &$0.366 $&$0.369  $&$ 0.368$  \\
$19 $&$0.105 $&$0.109 $ &$ 0.108 $ &$ 0.107 $&$ 0.109 $&$0.109 $ \\
$ 20$&$ 0.705$&$ 0.706$ &$0.705  $ &$0.702 $&$ 0.706 $&$0.703 $  \\
$\vdots$&$\vdots $&$ \vdots$ &$ \vdots$ &$\vdots$&$\vdots $&$ \vdots$ \\\hline
 Values &$ 2812857$&$5965306 $ &  $ 10163034$&$15404441  $&$56042335 $ &$ 205444859    $   \\ \hline
\end{tabular}
\end{center} 
\footnotesize
\footnotesize
\begin{center}
\begin{tabular}{|c|c|c|c|c|c|c|c|}\hline 
\multicolumn{7}{|c|}{$N(\varepsilon)=-1,\ \left(\frac{d}{p}\right) =-1$}\\ \hline
\backslashbox { $q$}{ $d_{max}$ }& $40000 $&$60000$ & $80000$ &$ 100000  $ & $ 200000 $& $400000$ \\
\hline
$1$& $37.9 $&$ 37.6 $& $ 37.6  $&   $37.6   $ & $ 37.6 $& $37.5 $ \\
$2$&$37.1 $& $37.2  $&  $37.2  $ & $37.2  $&$ 37.3  $ &$ 37.3$ \\
$3$&$  6.66$& $ 6.65 $&  $6.64   $ & $6.62  $&$ 6.61  $ &$6.65 $ \\
$4$&$0.000 $& $0.000 $& $0.000   $& $ 0.000 $& $0.000 $&$ 0.000 $ \\ 
$5$& $1.90 $&$ 1.90  $& $1.92 $ &$1.90  $&$ 1.90  $&$1.89  $\\
$6$& $6.59 $&$ 6.60  $&   $ 6.64  $ &$6.63  $&$  6.64$ &$6.64   $ \\ 
$7$&  $ 0.889 $&$0.910 $&  $0.897  $ &$  0.891  $& $ 0.891 $ & $ 0.899 $\\
$8$& $0.000 $&$0.000 $& $ 0.000   $&  $0.000  $& $0.000  $&$0.000  $\\
$9$&$ 0.727 $&$0.732  $&   $ 0.729 $ & $0.723  $&$ 0.731 $&$0.733   $ \\
$10$& $1.85 $& $ 1.85   $&$1.84  $ & $ 1.87 $&$ 1.87 $&$ 1.88   $ \\
$11$&$0.344 $&$ 0.345  $&$0.347  $ &$0.351  $&$0.338  $ &$ 0.343 $ \\ 
$12$&$0.000 $&$0.000  $&$  0.000 $ &$0.000 $&$0.000 $ &$ 0.000  $ \\
$13$&$ 0.233 $&$0.236 $&$0.238  $ &$0.240  $&$ 0.241 $ &$0.239  $ \\
$14$& $0.895 $&$ 0.907  $&$0.907  $ &$0.900 $&$0.899 $ &$ 0.895   $ \\
$15$& $0.342 $&$ 0.336  $&$0.336  $ &$0.332 $&$0.334 $ &$ 0.333   $ \\
$16 $&$ 0.000 $&$ 0.000 $ &$ 0.000  $ &$ 0.000 $&$ 0.000  $&$ 0.000 $ \\
$17$&$ 0.135  $&$0.138 $ &$0.139  $ &$0.140 $&$ 0.138  $&$0.138 $ \\
$18 $&$0.734 $&$0.741 $ &$ 0.750 $ &$0.754 $&$0.742  $&$0.742 $  \\
$19 $&$0.112 $&$ 0.112$ &$ 0.113 $ &$ 0.111 $&$ 0.110 $&$0.110 $ \\
$ 20$&$0.000 $&$0.000 $ &$ 0.000 $ &$0.000 $&$0.000  $&$0.000 $  \\
$\vdots$&$\vdots $&$ \vdots$ &$ \vdots$ &$\vdots$&$\vdots $&$ \vdots$  \\ \hline
 Values & $2828439 $&$5992331$&  $10206629$ &$15464072   $ & $56197833   $& $205855014  $\\ \hline
 \end{tabular}
 \end{center}
\newpage
\normalsize
{\it Comments:} The first two tables appear to be rather similar. In the first two tables $q=1$ predominates. In the third table the sum of frequencies for $q=1$ and $q=2$
is about the frequency for $q=1$ of the first two tables.

But the last table is quite different. The first two frequencies are almost equal and    
%
we notice that when $q=4k$ for $k\in\N$, $F_{4}(4;m)=0$. Now, we give a proof that this is indeed true. 
\begin{Th}
 Let $N(\varepsilon)=-1,\ \left(\frac{d}{p}\right)=-1$. Then the case $q=4$ does not appear.
\end{Th}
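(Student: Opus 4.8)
The plan is to pass to the residue field at $p$. Since $p\nmid d$ and $\left(\frac{d}{p}\right)=-1$, the polynomial $x^{2}-d$ is irreducible over $\F_{p}$, so $\mathcal{O}_{K}/p\mathcal{O}_{K}=\Z[\sqrt{d}]/(p)$ is the field $\F_{p^{2}}=\F_{p}(\theta)$ with $\theta^{2}=\bar d$. Let $\bar\varepsilon\in\F_{p^{2}}^{\times}$ be the image of $\varepsilon$; it is a unit because $N(\varepsilon)=\pm1$. Writing $\varepsilon^{\nu}=a_{\nu}+b_{\nu}\sqrt{d}$ with $a_{\nu},b_{\nu}\in\Z$, we have $\varepsilon^{\nu}\in\mathcal{O}_{p}$ if and only if $p\mid b_{\nu}$, i.e.\ if and only if $\bar\varepsilon^{\,\nu}$ lies in the prime subfield $\F_{p}$. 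Hence, denoting by $k$ the multiplicative order of $\bar\varepsilon$ in $\F_{p^{2}}^{\times}$ and using $\bar\varepsilon^{\,\nu}\in\F_{p}\iff\bar\varepsilon^{\,\nu(p-1)}=1\iff k\mid\nu(p-1)$, we obtain
\begin{equation*}
n(p)=\frac{k}{\gcd(k,p-1)} .
\end{equation*}

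The next step is to compute the $2$-adic valuation of $k$. The Frobenius $x\mapsto x^{p}$ is the nontrivial automorphism of $\F_{p^{2}}/\F_{p}$, hence it equals the reduction of the conjugation $a+b\sqrt{d}\mapsto a-b\sqrt{d}$; since the conjugate of $\varepsilon$ is $N(\varepsilon)\varepsilon^{-1}=-\varepsilon^{-1}$, this gives $\bar\varepsilon^{\,p}=-\bar\varepsilon^{-1}$, that is,
\begin{equation*}
\bar\varepsilon^{\,p+1}=-1 .
\end{equation*}
As $p$ is odd, $-1\neq1$ in $\F_{p^{2}}$, so $k\mid 2(p+1)$ but $k\nmid p+1$; therefore $v_{2}(k)=v_{2}(p+1)+1$.

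Finally I would deduce the statement from a short $2$-adic computation on $q=(p+1)/n(p)=(p+1)\gcd(k,p-1)/k$ (an integer, by the divisibility recalled before \rft{q1}), distinguishing the residue of $p$ modulo $4$. If $p\equiv1\pmod4$ then $v_{2}(p+1)=1$, so $v_{2}(k)=2$, while $v_{2}(p-1)\geq2$, whence $v_{2}(\gcd(k,p-1))=2$ and $v_{2}(q)=1+2-2=1$. If $p\equiv3\pmod4$ then $v_{2}(p-1)=1$ and $v_{2}(k)=v_{2}(p+1)+1\geq3$, whence $v_{2}(\gcd(k,p-1))=1$ and $v_{2}(q)=v_{2}(p+1)+1-(v_{2}(p+1)+1)=0$. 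In both cases $4\nmid q$; in particular $q=4$ cannot occur, which is the assertion. (The same computation shows that no multiple of $4$ occurs, in agreement with the last table.)

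The genuinely load-bearing points are the two equivalences of the first paragraph — the translation $\varepsilon^{\nu}\in\mathcal{O}_{p}\iff\bar\varepsilon^{\,\nu}\in\F_{p}$ and the ensuing formula $n(p)=k/\gcd(k,p-1)$ — together with the identity $\bar\varepsilon^{\,p+1}=-1$, which is precisely where $N(\varepsilon)=-1$ is used; once these are in place the valuation bookkeeping is routine. I expect the step needing most care to be the justification that the Frobenius of $\F_{p^{2}}$ realizes the conjugation of $K$ on the reduction of $\varepsilon$, since this is what yields $\bar\varepsilon^{\,p+1}=-1$ and hence the decisive constraint $v_{2}(k)=v_{2}(p+1)+1$.
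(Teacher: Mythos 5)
Your proof is correct, and it takes a genuinely different route from the paper's. The paper argues via the explicit polynomial identity $2(x^{2}-N)B_{\nu-1}(x)^{2}=A_{2\nu}(x)-N^{\nu}$ together with the congruence $A_{\frac{p+1}{2}}\equiv 0\pmod p$ imported from the companion paper \cite{Bir}: specializing to $\nu=\frac{p+1}{4}$ and $N=-1$ forces $B_{\frac{p+1}{4}-1}\not\equiv 0\pmod p$, contradicting $\varepsilon^{(p+1)/4}\in\mathcal{O}_{p}$. You instead work intrinsically in $\F_{p^{2}}=\mathcal{O}_{K}/p\mathcal{O}_{K}$: the translation $\varepsilon^{\nu}\in\mathcal{O}_{p}\iff\bar\varepsilon^{\,\nu}\in\F_{p}$ gives $n(p)=k/\gcd(k,p-1)$ for $k$ the order of $\bar\varepsilon$, and the identification of Frobenius with conjugation yields $\bar\varepsilon^{\,p+1}=-1$ (this is exactly where $N(\varepsilon)=-1$ enters), hence the sharp valuation $v_{2}(k)=v_{2}(p+1)+1$; the rest is bookkeeping, and all steps check out. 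What your approach buys is that it is self-contained (no appeal to the $A_{\nu},B_{\nu}$ machinery) and it proves the stronger fact $4\nmid q$, i.e.\ it rules out every $q\equiv 0\pmod 4$ at once — which is what the last table actually exhibits ($F_{4}(4k;m)=0$ for all $k$), whereas the theorem as stated in the paper only addresses $q=4$. The paper's computation, conversely, is shorter once the results of \cite{Bir} are granted. The one step you flagged as delicate — that the Frobenius of $\F_{p^{2}}$ realizes the conjugation of $K$ on reductions — is immediate here from $(\sqrt{d})^{p}=d^{(p-1)/2}\sqrt{d}=\left(\frac{d}{p}\right)\sqrt{d}=-\sqrt{d}$ in $\F_{p^{2}}$, so no gap remains.
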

\begin{proof} Suppose that $q=4$ does occur. Then $F_{4}(4;m)\neq0$ and thus $\nu=\frac{p+1}{4}.$ In \cite{Bir} we wrote
\begin{equation*}
 \varepsilon^{n}=A_{\nu}+B_{\nu-1}v\sqrt{d}
\end{equation*}
In our case $\nu=\frac{p+1}{4}$ and thus $B_{\frac{p+1}{4}-1}$. Since $\nu\in\N$ it follows that $p\equiv 3\pmod 4.$
In \cite{Bir} Theorem $3.1$ we proved that, for  $\left(\frac{d}{p}\right)=-1$, $A_{\frac{p+1}{2}}\equiv 0 \pmod p$ if $\left(\frac{N}{p}\right)=-1$ and  for $N\in\Z$, $N\neq0$
by using the following formula 
\begin{equation*}
2\left(x^{2}-N\right)B_{\nu-1}(x)^{2}=A_{2\nu}(x)-N^{\nu}
 \end{equation*}
we obtain
\begin{align*}
 2\left(x^{2}+1\right)B_{\frac{p+1}{4}-1}(x)^{2}&\equiv A_{\frac{p+1}{2}}(x)-(-1)^{\frac{p+1}{4}} \\
&\equiv1 \pmod p
\end{align*}
So, $B_{\frac{p+1}{4}-1}\not\equiv 0\pmod p$ which is a contradiction.
\end{proof}
We compute the expectation of the probability distribution for $N(\varepsilon)=+1$, $\left(\frac{d}{p}\right)=+1$ which are listed below.
\footnotesize
\begin{center}
\begin{tabular}{|c|c|c|c|c|c|c|c|c|}\hline
{\it Ranges$=m$}&$1000$& $10000$&$15000$&$20000$&$ 25000 $& $ 30000 $ \\ 
\hline
{\it Expectation$=E$}&$ 3.921$& $6.086 $ &$  6.397$&  $ 6.759$ &$6.946  $ & $ 7.098 $ \\\hline
$\log(m)\times\log(\log m) $&$13.344 $&$20.448 $&$21.758 $&$22.697 $&$23.441 $&$24.047 $\\ \hline
$E/(\log(m)\times\log(\log m)) $&$0.293$&$0.297 $&$0.294 $&$ 0.297$&$0.296 $&$0.295 $\\ \hline
\end{tabular}
\end{center}
\begin{center}
\begin{tabular}{|c|c|c|c|c|c|c|c|}\hline
{\it Ranges$=m$}&$ 40000$& $50000$ &$ 60000  $&  $ 70000$ &$ 80000  $ & $ 100000$ \\ 
\hline
{\it Expectation$=E$}&$ 7.300$& $ 7.472$ &$ 7.681  $&  $7.785$ &$7.922 $ &  $8.152 $\\ \hline
$\log(m)\times\log(\log m) $&$25.012 $&$ 25.763$&$26.383 $&$26.536 $&$27.362 $&$28.129 $\\ \hline
$E/(\log(m)\times\log(\log m) )$&$ 0.291$&$0.290 $&$0.291 $&$0.293 $&$0.289 $&$0.289 $\\ \hline
\end{tabular}
\end{center}
\normalsize
This table suggests that the expectation has the order of $\log(m)\times\log(\log m)$ for large $m$ where $m$ is the range of $d$ and $p$.
\setcounter{secnumdepth}{0}
\appendix
\section{Appendix}

We use the following procedure to obtain the values. The computation is similar for $N(\varepsilon)= -1.$ At each step of the computation the numbers are modulo $p$ so that they are 
$32$ bit numbers. To speed things up, one should not calculate $\varepsilon^{i}$ as an element of the number field. It is faster to just calculate the coefficients modulo $p$ 
with respect to the basis $\{1,\sqrt{d}\}$. They can be calculated with a fast exponentiation algorithm. Computations are made by using \cite{BCP}.

\begin{algorithm}
 \caption{Finding $n(f)=n(p)$}
\
\begin{algorithmic}[1]
\bigskip
\STATE Compute the Norm of $\varepsilon$ and the Legendre Symbol $ls\leftarrow \left(\frac{d}{p}\right)$
\IF {\text{Norm}$(\varepsilon)= 1$}
\STATE {$q \leftarrow (p-ls)/2$.}
\ELSE
\STATE $q \leftarrow p-ls$
\ENDIF
\STATE Calculate factorization of $q=p_{1}^{r_{1}}\dots p_{t}^{r_{t}}$ and set $\nu\leftarrow 1 $
\FOR {$i$ in $[1..t]$} 
\STATE Calculate $F[i]\leftarrow  \text{max}\{b\in \{0,\dots,r_{i}\}|$\text{ for which the second coefficient}
$ y_{{q}/{p_{i}^{b}}}$ \text{(see \rft{a1}) } \text{of}  \  $ \varepsilon^{\frac{q}{p_{i}^{b}}}$ is divisible by $p$\} \\
$\nu \leftarrow \nu\cdot p_{i}^{F[i]} $
\ENDFOR
\STATE Compute $\frac{p\pm1}{2n(p)}$ for $N(\varepsilon)=1$ or $\frac{p\pm1}{n(p)}$ for $N(\varepsilon)=-1.$
\end{algorithmic}
\end{algorithm}

\begin{center}
\bf{Acknowledgements}
\end{center}
$\bullet$ I would like to thank Prof. Dr. Michael E. Pohst for being my PhD supervisor and for his guidance during this work.

$\bullet$ I would like to thank Prof. Dr. Franz Halter-Koch for suggesting this very interesting problem.

$\bullet$ I would like to thank Gerriet M\"{o}hlmann very much for helping me to implement and repeatedly improve the algorithm.

{\small
\def\refname{References}
\newcommand{\etalchar}[1]{$^{#1}$}

\end{document}